\documentclass[a4paper,11pt]{article}
\usepackage[T1]{fontenc}
\usepackage{lmodern}
\usepackage{amsmath,amssymb,amsthm}
\usepackage{hyperref}
\usepackage[capitalise,nameinlink,noabbrev]{cleveref}
\crefname{assumption}{Assumption}{Assumptions}
\crefname{conjecture}{Conjecture}{Conjectures}
\crefname{lemma}{Lemma}{Lemma}
\crefname{proposition}{Proposition}{Propositions}
\crefname{equation}{}{}
\crefname{algocf}{Algorithm}{Algorithms}
\crefname{Question}{Question}{Questions}

\newtheorem{theorem}{Theorem}

\usepackage[a4paper, margin=3cm]{geometry}

\usepackage{graphicx} 
\usepackage{amsfonts,cite,verbatim,subcaption,tikz,float,multicol,url,booktabs,mathabx,mathrsfs}
\usetikzlibrary{positioning}
\usepackage[most]{tcolorbox}
\usepackage[section]{placeins}
\usepackage[algo2e,ruled,linesnumbered]{algorithm2e}

\newcommand{\argmax}{\mathop{\arg\max}}
\newcommand{\Ttran}{\mathsf{T}}

\newcommand{\fro}{\mathsf{F}}
\newcommand{\defi}{:=}

\newcommand{\normml}{\left\vert\kern-0.25ex\left\vert\kern-0.25ex\left\vert}  
\newcommand{\normmr}{\right\vert\kern-0.25ex\right\vert\kern-0.25ex\right\vert}
\newcommand{\order}{\mathcal{O}}

\newcommand{\R}{\mathbb{R}}

\newcommand{\mpr}{\mathbf{u}}
\newcommand{\fig}{eps}

\newcommand{\figsizeD}{0.45\textwidth}
\newcommand{\figsizeT}{0.3\textwidth}

\providecommand{\abs}[1]{\lvert#1\rvert}
\providecommand{\norm}[1]{\lVert#1\rVert}

\providecommand{\bigabs}[1]{\bigl\lvert#1\bigr\rvert}
\providecommand{\Bigabs}[1]{\Bigl\lvert#1\Bigr\rvert}

\usepackage[capitalise,nameinlink,noabbrev]{cleveref}

\usepackage{color}

\crefname{equation}{}{}
\crefname{Assumption}{Assumption}{Assumptions}
\crefname{algocf}{Algorithm}{Algorithms}
\title{Accelerating preconditioned Jacobi methods via perturbation-inspired pivoting
}
\author{
        Nian Shao\thanks{Institute of Mathematics, EPF Lausanne, 1015 Lausanne, Switzerland (\href{mailto:nian.shao@epfl.ch}{nian.shao@epfl.ch})}
        \and
        Yuji Nakatsukasa\thanks{Mathematical Institute, University of Oxford, Oxford, OX2 6GG, UK (\href{nakatsukasa@maths.ox.ac.uk}{nakatsukasa@maths.ox.ac.uk}).}
        }
\date{\today}

\begin{document}

\maketitle

\begin{abstract}
    Perturbation theory for symmetric matrices shows that eigenvalues with small spectral gaps are more sensitive to off-diagonal perturbation, implying that different  entries affect the eigenvalues unevenly. 
    Building on this insight, we incorporate spectral gap information into the Jacobi method for symmetric eigenvalue problems 
    and propose a new pivoting strategy, which is completely different from classical ones governed solely by the magnitude of the off-diagonal entries. 
    When combined with a mixed-precision preconditioner that diagonalizes the matrix to low precision, 
    numerical experiments demonstrate that the resulting strategy can significantly outperform the classical greedy approach when the original matrix has clustered eigenvalues.
\end{abstract}

\medskip\textbf{Keywords:} 
Jacobi method, symmetric eigenvalue problems, eigenvalue perturbation

\medskip\textbf{AMS subject classifications (2020):} 65F15
\section{Introduction}
The Jacobi method, dating back to 1846 \cite{Jacobi1846}, is one of the oldest numerical methods for solving symmetric eigenvalue problems and singular value decomposition (SVD) \cite{Hestenes1958}. 
It applies a sequence of orthogonal transformations 
(Givens rotations in the simplest (non-block) case) 
to annihilate the off-diagonal entries of $A$ for eigenvalue problems (or $B^{\Ttran}B$ for SVD).
A key advantage of Jacobi's method is that it can achieve higher accuracy than methods based on tridiagonalization for eigenvalue problems (or bidiagonalization for SVD) 
for certain classes of matrices \cite{Demmel1992}. 
This high accuracy has recently become even more relevant as mixed-precision algorithms \cite{Higham2022} gain popularity with the growing support for low-precision arithmetic on modern hardware such as GPUs. Because the Jacobi method can be interpreted as a form of iterative refinement, it is particularly well suited to mixed-precision settings \cite{Gao2025,Higham2025,zhou2026computing} and is widely used for accurate eigenvalue and SVD computations.

The convergence of the Jacobi method depends strongly on the chosen pivoting strategy, i.e., the order in which the off-diagonal entries to be eliminated are chosen. The original greedy strategy proposed by Jacobi annihilates the largest off-diagonal entry at each step, yielding a linear convergence.
A recently proposed randomized pivoting strategy \cite{detherage2025unified} attains the same convergence rate in expectation. Another widely used strategy is the cyclic Jacobi method, which applies a full sweep over all index pairs $(i,j)$ with $i\neq j$, and repeats until convergence. As shown in \cite{Wilkinson1962}, the cyclic Jacobi method exhibits local quadratic convergence with respect to the number of sweeps. Because of its favorable local convergence properties, Jacobi's method is often combined with a preconditioning technique \cite{Veselic1989,Drmac2007,Drmac2007a} that transforms the matrix $A$ into an approximately diagonal form.

A key insight of this work is that the magnitude of the off-diagonal entries alone does not fully characterize their impact on the eigenvalue perturbation.  Eigenvalue perturbation bounds, see \cite{Li2005} for example, show that well-separated eigenvalues converge quadratically with respect to off-diagonal perturbations, meaning that eliminating all off-diagonal entries is unnecessary for accurate eigenvalue computation. In particular, a larger off-diagonal entry does not necessarily lead to a larger perturbation in the corresponding eigenvalues.
Consider the following example: 
\begin{equation*}
    A = \mathsf{blkdiag}\Biggl\{\begin{bmatrix}
        1 & 10^{-8} \\ 
        10^{-8} & 2
    \end{bmatrix}, \begin{bmatrix}
        1 & 10^{-9} \\ 
        10^{-9} & 1+10^{-10}
    \end{bmatrix} \Biggr\}.
\end{equation*}
With the classical greedy pivoting strategy, 
Jacobi's method will choose the pivot $(1,2)$ first because it has larger magnitude. However, in double precision, this entry is actually harmless for eigenvalues 
(that is, the diagonal entries already approximate the exact eigenvalues to within $10^{-16}$)
because the large spectral gap of 1 (between 1 and 2). In contrast, the smaller entry $10^{-9}$ introduces an error of $\order(10^{-9})$ to its eigenvalues due to the tiny gap of $10^{-10}$. This phenomenon indicates that we should take the spectral information into consideration when choosing the pivots.
In this paper, we propose a new greedy pivoting strategy (derived through a perturbation bound due to Li-Li, see~\eqref{eq:LiLi}):
\begin{equation}
    \label{eq:pivot}
    (i_{*},j_{*}) = \argmax_{i\neq j}L_{ij}(A),
    \quad\text{where}\quad
    L_{ij}(A)\defi  \frac{2\abs{a_{ij}}^{2}}{\abs{a_{ii}-a_{jj}}+\sqrt{\abs{a_{ii}-a_{jj}}^{2}+4\abs{a_{ij}}^{2}}}.
\end{equation}
Compared with the classical greedy strategy, our new formulation assigns different weights to different off-diagonal entries. This is inspired by the perturbation bound in \cite[Thm.~2]{Li2005}, which shows that eigenvalues with small spectral gaps are more sensitive to perturbations. For the example above, our strategy successfully chooses the pivot $(3,4)$.
In contrast to the classical strategy, our strategy may stop before transforming $A$ into a (numerically) diagonal matrix. Indeed, we show that once $L_{ij}(A)\leq \epsilon$ for all pivoting, then the diagonal entries of $A$ approximate the eigenvalues with error bounded by $2n(n-1)\epsilon$. 
When combined with a mixed-precision preconditioner, numerical results show that the resulting pivoting strategy can significantly outperform the classical greedy strategy, particularly when there are small gaps between the eigenvalues of the original matrix. 

\section{Jacobi's method and mixed-precision preconditioning}

Let $A\in\R^{n\times n}$ be a symmetric matrix with entries $a_{ij}$. Given a pivot $(i,j)$ with $i< j$, one step of Jacobi's method finds an orthogonal matrix $G_{ij}$ such that $G_{ij}^{\Ttran}A_{ij}G_{ij}$ is diagonal, where
\begin{equation*}
    G_{ij}=\begin{bmatrix}
    c &-s\\ 
    s & c
\end{bmatrix}
\quad\text{and}\quad
A_{ij}=\begin{bmatrix}
    a_{ii} & a_{ij}\\ 
    a_{ji} & a_{jj}
\end{bmatrix} 
\end{equation*}
Then the matrix is updated by an orthogonal similarity transformation with
\begin{equation*}
    G(c,s,i,j)\defi \begin{bmatrix}
    I_{i-1}&&&&\\ 
    &c&&-s&\\ 
    &&I_{j-i-1}&&\\ 
    &s&&c&\\ 
    &&&&I_{n-j}
\end{bmatrix}.
\end{equation*} 
After a sequence of transformations, the Jacobi's method aims to annihilate all off-diagonal entries and transform $A$ into a diagonal matrix whose diagonal entries are its eigenvalues.

For the computation of SVD of $B=[b_{1},\dotsc,b_{n}]\in\R^{m\times n}$ with $m\geq n$, the one-sided Jacobi essentially applies Jacobi's method for $A=B^{\Ttran}B$. Specifically, given a pivot $(i,j)$, we find an orthogonal matrix $G_{ij}$ such that 
the two columns $[b_{i},b_{j}]G_{ij}$ become orthogonal, and then update $B$ by multiplying $G(c,s,i,j)$ from the right. Unlike in eigenvalue computation, the one-sided Jacobi method aims to make the columns of $B$ orthogonal (not necessarily orthonormal) and then extracts the singular values from the norms of these columns.
The pseudocode for Jacobi's method for computing eigenvalues and SVD are summarized in \cref{algo:EVP,algo:SVD}, respectively.

\begin{algorithm2e}[htbp]
    \caption{Jacobi method for eigenvalue computation}
    \label{algo:EVP}
    \KwIn{A symmetric matrix $A$\;}
    \While{Not convergent}{
        Find a pivot $(i,j)$\;
        Compute $c$ and $s$ such that $G_{ij}^{\Ttran}A_{ij}G_{ij}$ is diagonal\;
        Update $A$ by $A\gets G(c,s,i,j)^{\Ttran}AG(c,s,i,j)$\;   
    }
\end{algorithm2e}

\begin{algorithm2e}[htbp]
    \caption{One-sided Jacobi method for SVD}
    \label{algo:SVD}
    \KwIn{A matrix $B\in\R^{m\times n}$ with $m\geq n$\;}
    \While{Not convergent}{
        Find a pivot $(i,j)$\;
        Compute $c$ and $s$ such that $[b_{i},b_{j}]G_{ij}$ contains orthogonal columns\;
        Update $B$ by $B\gets BG(c,s,i,j)$\;   
    }
\end{algorithm2e}

 \subsection{Mixed-precision preconditioning} 
 
As mentioned above, preconditioning can significantly accelerate the convergence of Jacobi's method. Here we describe the mixed-precision preconditioner.
Denote the working precision and a lower precision by $\mpr$ and $\mpr_{\ell}$, respectively, where $\mpr\ll \mpr_{\ell}$. The mixed-precision preconditioned Jacobi method for eigenvalue problems first computes a spectral decomposition in lower precision:
\begin{equation*}
    A = V_{\ell}\Lambda_{\ell}V_{\ell}^{\Ttran}+\order(\norm{A}\mpr_{\ell}),
\end{equation*}
where $V_{\ell}$ is orthogonal in lower precision, that is $V_{\ell}^{\Ttran}V_{\ell}-I=\order(\mpr_{\ell})$. Then we reorthogonalize $V_{\ell}$ to working precision, denoted by $V$, by the Newton--Schulz iteration or Householder QR decomposition. The matrix $V$ is called a preconditioner to $A$, and we apply Jacobi's method to the preconditioned matrix $V^{\Ttran}AV$, whose off-diagonal entries are $\order(\norm{A}\mpr_{\ell})$.

For the SVD, we first compute the right singular vectors $V_{\ell}$ in lower precision, obtaining
\begin{equation*}
    B=U_{\ell}\Sigma_{\ell}V_{\ell}^{\Ttran}+\order(\norm{B}\mpr_{\ell}),
\end{equation*}
and reorthogonalize it in working precision to obtain $V$. We next apply the one-sided Jacobi method to the preconditioned matrix $BV$, whose columns are orthogonal up to $\order(\norm{B}\mpr_{\ell})$. 
To use our pivoting strategy for the SVD, one can take $(BV)^{\Ttran}BV$ as the matrix $A$ after preconditioning in what follows.

\section{A new pivoting strategy based on eigenvalue perturbation}
Given two symmetric matrices of the form
\begin{equation*}
    T = \begin{bmatrix}
        T_{1} &\\ 
        & T_{2}
    \end{bmatrix}
    \quad\text{and}\quad 
    \widehat{T} = \begin{bmatrix}
        T_{1} & E^{\Ttran} \\ 
        E & T_{2}
    \end{bmatrix},
\end{equation*}
Li and Li \cite[Thm.~2]{Li2005} established a powerful eigenvalue perturbation bound as 
\begin{equation}
    \label{eq:LiLi}
    \Bigabs{\lambda_{i}(T)-\lambda_{i}(\widehat{T})} \leq \frac{2\norm{E}^{2}}{\eta_{i}+\sqrt{\eta_{i}^{2}+4\norm{E}^{2}}},
    \quad\text{where}\quad 
    \eta_{i}\defi \max_{j=1,2}\min_{\mu\in\Lambda(T_{j})} \abs{\lambda_{i}(T)-\mu}.
\end{equation}
The Li--Li bound \cref{eq:LiLi} shows that when the spectral gap $\eta_{i}$ is large, the eigenvalue perturbation is quadratic in $\norm{E}$, whereas a small gap results in a larger, linear perturbation, as described by Weyl's inequality.

In Jacobi's method for eigenvalue computation, we approximate eigenvalues of $A$ by its diagonal entries. Therefore, we want the off-diagonal entries to perturb the eigenvalues as little as possible. If we take 
\begin{equation*}
    \widehat{T}=A_{ij} = \begin{bmatrix}
        a_{ii} & a_{ij}\\ 
        a_{ji} & a_{jj}
    \end{bmatrix}, 
\end{equation*}
then the function $L_{ij}(A)$ defined in \cref{eq:pivot} is precisely the Li--Li's bound \cref{eq:LiLi}, which quantifies how much the off-diagonal entry $a_{ij}$ perturbs the eigenvalues of $A_{ij}$. Our new strategy \cref{eq:pivot} therefore follows a greedy approach that attempts to annihilate the off-diagonal entry causing the largest potential eigenvalue perturbation. This argument is informal as it is based on the $2\times 2$ matrix $A_{ij}$ and not the whole matrix $A$. We next show that this strategy nonetheless allows us to give strong guarantees on the eigenvalue accuracy globally.

\subsection{Convergence analysis}

In general, our strategy does not guarantee that $A$ will converge to a diagonal matrix, because $L_{ij}(A)$ can be of $\order(\mpr)$ even when $\abs{a_{ij}} \gg \mpr$. Instead, the following theorem shows that if $L_{ij}(A) \leq \epsilon$ for all $i \neq j$, then the diagonal entries of $A$ indeed approximate its eigenvalues with accuracy $2n(n-1)\epsilon$.
Thus, for any prescribed tolerance $\tau$, 
one can guarantee eigenvalue accuracy of $\tau$ 
by setting $\epsilon=\tau/(2n(n-1))$ and running Jacobi's method until $L_{ij}(A) \leq \epsilon$.

\begin{theorem}
    \label{thm:conv}
    Let $A\in\R^{n\times n}$ be a symmetric matrix with $(i,j)$ entry $a_{ij}$, where $a_{11}\geq a_{22}\geq  \dotsb \geq a_{nn}$. Suppose that for all $i\neq j$,
    \begin{equation*}
        L_{ij}(A)=\frac{2\abs{a_{ij}}^{2}}{\abs{a_{ii}-a_{jj}}+\sqrt{\abs{a_{ii}-a_{jj}}^{2}+4\abs{a_{ij}}^{2}}}\leq \epsilon.
    \end{equation*} 
    Denoting the $k$-th largest eigenvalue of $A$ by $\lambda_{k}(A)$, we have   
    \begin{equation*}
        \bigabs{\lambda_{k}(A)-a_{kk}}\leq 2n(n-1)\epsilon.
    \end{equation*}  
\end{theorem}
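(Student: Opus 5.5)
We argue by removing the off-diagonal entries of $A$ one symmetric pair at a time and bounding the eigenvalue change at each stage with the Li--Li bound \cref{eq:LiLi}. Order the $n(n-1)/2$ pairs $(i,j)$, $i<j$, arbitrarily and set $A^{(0)}=A$. Let $A^{(m)}$ be obtained from $A^{(m-1)}$ by zeroing the entries $a_{ij}$ and $a_{ji}$ of the $m$-th pair. The diagonal never changes, so $A^{(N)}=\mathrm{diag}(a_{11},\dots,a_{nn})$ with $N=n(n-1)/2$. Since the diagonal is ordered, $\lambda_k(A^{(N)})=a_{kk}$. It therefore suffices to show
\[
\bigabs{\lambda_k(A^{(m-1)})-\lambda_k(A^{(m)})}\le 4\epsilon
\]
for every $k$ and every step; the telescoping sum then gives $4N\epsilon=2n(n-1)\epsilon$.

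A single step is a rank-two symmetric perturbation $a_{ij}(e_ie_j^\Ttran+e_je_i^\Ttran)$, and there are two regimes. If $\abs{a_{ij}}$ is small relative to $\delta=\abs{a_{ii}-a_{jj}}$, then $L_{ij}\approx \abs{a_{ij}}^2/\delta$. If instead $\abs{a_{ij}}\gtrsim\delta$, then $L_{ij}\ge c\abs{a_{ij}}$. In the second regime Weyl's inequality already gives a change of at most $\abs{a_{ij}}\lesssim \epsilon$.

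The first regime, where $\abs{a_{ij}}\gg\epsilon$ but $\abs{a_{ij}}^2/\delta\le\epsilon$, is the real obstacle. Here Weyl is too weak, and we must exploit the gap $\delta$. The natural tool is to apply the Li--Li bound \cref{eq:LiLi} with a block partition $T_1,T_2$ chosen so that the coupling block $E$ consists of the single entry $a_{ij}$. The difficulty is that the other off-diagonal entries still present in $A^{(m)}$ couple indices $i$ and $j$ to everything else, so no such block partition exists directly.

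The first thing I would try is to change the order of removal so that this issue disappears. Process a row $i$ by first eliminating all entries linking $i$ to indices at comparable diagonal values, using Weyl steps. Then use Li--Li with $T_1=a_{ii}$ (a $1\times 1$ block) and $T_2$ the rest, taking $E$ as the whole remaining row. The gap $\eta$ in \cref{eq:LiLi} is then governed by the distance from $\lambda_k$ to $a_{ii}$ and to the spectrum of $T_2$.

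An alternative route goes through a Gershgorin/Schur-complement argument. For an eigenvalue $\lambda$ near $a_{kk}$, consider $\lambda - a_{kk} = \sum_{j} a_{kj}^2/(\lambda - \mu_j)$-type expressions and bound each term by $2L_{kj}$. Each such term is exactly of the form $\abs{a_{kj}}^2/(\abs{a_{kk}-a_{jj}}\text{-like gap})$, which is what $L_{kj}$ controls. The factor $4=2\cdot 2$ suggests exactly this kind of accounting: each entry contributes at most twice $L_{ij}$, once per endpoint.

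I expect this step, replacing the $2\times 2$ heuristic by a genuine bound on the full matrix with the correct spectral gap, to be the main difficulty. I would verify carefully that the gap appearing in \cref{eq:LiLi} can be lower bounded by $\abs{a_{ii}-a_{jj}}$, up to the $\order(\epsilon)$ shifts already accumulated.
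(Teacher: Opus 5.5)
\textbf{There is a genuine gap.} Your argument rests on the per-step bound $\bigabs{\lambda_k(A^{(m-1)})-\lambda_k(A^{(m)})}\le 4\epsilon$ for an \emph{arbitrary} removal order. You leave it unproved (you flag it as the main difficulty), and for an arbitrary order it is false. Here is a counterexample.

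Take $n=p+1$. Set $a_{11}=\delta$. For $j\ge 2$, set $a_{jj}=0$ and $a_{1j}=x$ with $x^2=\epsilon\delta+\epsilon^2$. For $2\le j\ne l$, set $a_{jl}=\epsilon$. Since $L_{ij}=\bigl(\sqrt{\abs{a_{ii}-a_{jj}}^2+4\abs{a_{ij}}^2}-\abs{a_{ii}-a_{jj}}\bigr)/2$, every $L_{ij}(A)$ equals $\epsilon$. The $p$ lower eigenvalues of $A$ are all exactly $-\epsilon$.

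Now suppose the first pair you zero is $(1,2)$. As $\delta\to\infty$, the eigenvalues near zero become $-\epsilon$ (multiplicity $p-2$) and $\epsilon\bigl(-1\pm\sqrt{4p-3}\bigr)/2$. So $\lambda_2$ moves by about $\epsilon\bigl(1+\sqrt{4p-3}\bigr)/2\approx\sqrt{n}\,\epsilon$, which exceeds $4\epsilon$ once $p\ge 14$. The theorem itself is not violated; only your per-step bound is.

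The mechanism is this. The within-cluster entries are each harmless, but together they rotate the coupling to the far index. Deleting one ``harmless'' far entry then acts on the cluster's Schur complement as a rank-two perturbation of norm about $\sqrt{p}\,\epsilon$.

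Your suggested repairs hit the same obstruction:
\begin{itemize}
\item The secular identity $\lambda-a_{kk}=\sum_j(\cdot)^2/(\lambda-\mu_j)$ involves the eigenvalues $\mu_j$ of the complementary block and the coupling written in its eigenbasis. It does not involve $a_{jj}$ and $a_{kj}$; that form holds only in the arrowhead case.
\item Li--Li with $T_1=a_{ii}$ for an interior index $i$ compares two quantities on different scales. $\norm{E}^2$ is governed by the \emph{largest} $\abs{a_{ii}-a_{jj}}$ left in the row. The gap $\eta$ is only the distance from $a_{ii}$ to nearby eigenvalues of $T_2$, since indices with diagonal close to $a_{ii}$ remain in $T_2$. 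The resulting bound can be of order $\sqrt{\epsilon}$ rather than $\order(\epsilon)$.
\end{itemize}

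The missing idea is to delete a whole row at once, chosen so that the gap and $\norm{E}^2$ live on the same scale. The paper does this by induction on $n$. It uses the exact reformulation $\abs{a_{ij}}^2\le\epsilon\abs{a_{ii}-a_{jj}}+\epsilon^2$ and the spread $g=a_{11}-a_{nn}$, and splits into two cases.
\begin{itemize}
\item If $g\le 8n(n-2)\epsilon$, then with $D$ the diagonal part of $A$ one has $\norm{A-D}_{\fro}^2\le\frac{n^2}{2}\epsilon g+n(n-1)\epsilon^2\le(2n(n-1)\epsilon)^2$, and Weyl finishes.
\item Otherwise it deletes the row of whichever extreme diagonal entry is farther from $a_{kk}$, so the elimination order depends on $k$. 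The induction hypothesis on the principal submatrix $\widehat A$ places $\lambda_k(\widehat A)$ within $2(n-1)(n-2)\epsilon$ of $a_{kk}$, hence at distance at least $\widehat g/2>g/4$ from $a_{nn}$. It then applies Li--Li once with $T_1=\widehat A$, $T_2=a_{nn}$, and $\norm{E}^2\le(n-1)\epsilon(g+\epsilon)$.
\end{itemize}
Because the gap and $\norm{E}^2/\epsilon$ both scale with $g$, the deleted row costs at most $4(n-1)\epsilon$. Summing over rows gives $2n(n-1)\epsilon$. Your ``$4\epsilon$ per pair'' is therefore correct only as an average over a row removed in one stroke, not as a bound for individual entries.
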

We note that the diagonals of $A$ are assumed to be ordered for convenience; if not, one can apply the theorem to a permuted matrix $P^TAP$ so that the assumption holds. 

\begin{proof}
    The proof is based on mathematical induction for the matrix size $n$. The situation for $n=2$ is a direct result of the Li--Li's bound \cref{eq:LiLi}. 
    
    For a general $n>2$, we consider $\lambda_{k}(A)$ and $a_{kk}$. 
    From $L_{ij}(A)\leq \epsilon$, we know that 
    \begin{equation*}
        2\abs{a_{ij}}^{2}-\epsilon\abs{a_{ii}-a_{jj}}\leq  \epsilon\sqrt{\abs{a_{ii}-a_{jj}}^{2}+4\abs{a_{ij}}^{2}}.
    \end{equation*}
    implying that $\abs{a_{ij}}^{2}\leq \epsilon \abs{a_{ii}-a_{jj}}$ or
    \begin{equation*}
        \begin{aligned}
            &0<2\abs{a_{ij}}^{2}-\abs{a_{ii}-a_{jj}}\leq \epsilon\sqrt{\abs{a_{ii}-a_{jj}}^{2}+4\abs{a_{ij}}^{2}}\\ 
        &\Longleftrightarrow    
        4\abs{a_{ij}}^{4}-4\epsilon\abs{a_{ii}-a_{jj}}\abs{a_{ij}}^{2}+\epsilon^{2}\abs{a_{ii}-a_{jj}}^{2}\leq \epsilon^{2} \abs{a_{ii}-a_{jj}}^{2}+4\epsilon^{2}\abs{a_{ij}}^{2}\\ 
        &\Longleftrightarrow    
        \abs{a_{ij}}^{2}\leq \epsilon\abs{a_{ii}-a_{jj}}+ \epsilon^{2}.
        \end{aligned} 
    \end{equation*}
    Consequently, letting $g=a_{11}-a_{nn}$, we obtain
    \begin{equation*}
        \abs{a_{ij}}^{2}\leq \epsilon \abs{a_{ii}-a_{jj}}+\epsilon^{2} \leq  \epsilon (g+\epsilon), 
    \end{equation*}
    according to $L_{ij}(A)\leq \epsilon$. Moreover, recalling that $a_{11}\geq \dotsb \geq a_{nn}$, we have 
    \begin{equation*}
        \begin{aligned}
            \norm{A-D}_{\fro}^{2} &= \sum_{i\neq j}\abs{a_{ij}}^{2} \leq 2\epsilon \Bigl(\sum_{i<j}(a_{ii}-a_{jj})\Bigr)+n(n-1)\epsilon^{2}\\ 
            & =2\epsilon \Bigl(\sum_{i=1}^{\lfloor n/2 \rfloor}(n-2i+1)(a_{ii}-a_{n-i+1,n-i+1})\Bigr)+n(n-1)\epsilon^{2} \leq \frac{n^{2}}{2}\epsilon g+n(n-1)\epsilon^{2}.
        \end{aligned}
    \end{equation*}

    We consider two cases. First if $g\leq 8n(n-2)\epsilon$, by Weyl's inequality, we know that 
    \begin{equation*}
        \bigabs{\lambda_{k}(A)-a_{kk}}^{2}\leq \norm{A-D}^{2}\leq \norm{A-D}_{\fro}^{2} \leq  \bigl(4n^{3}(n-2)+n(n-1)\bigr)\epsilon^{2}\leq \bigl(2n(n-1)\epsilon\bigr)^{2}.
    \end{equation*}
    
    If $g>8n(n-2)\epsilon$, without loss of generality, we assume that $a_{11}-a_{kk}\leq a_{kk}-a_{nn}$, implying that $k\neq n$ and $a_{kk}-a_{nn}\geq g/2$.
    Partition the matrix $A$ as 
    \begin{equation*}
        A = \begin{bmatrix}
            \widehat{A} & E^{\Ttran}\\ 
            E & a_{nn}
        \end{bmatrix}.
    \end{equation*}
    Note that $\widehat{A}$ is an $(n-1)\times (n-1)$ symmetric matrix satisfying $L_{ij}(\widehat{A})\leq \epsilon$ for all $i\neq j$.
    By the induction assumption, we know that 
    \begin{equation*}
        \bigabs{\lambda_{k}(\widehat{A})-a_{kk}}\leq 2(n-1)(n-2)\epsilon.
    \end{equation*}
    Applying Li--Li's bound \cref{eq:LiLi} to $A$, we obtain 
    \begin{equation*}
        \bigabs{\lambda_{k}(\widehat{A})-\lambda_{k}(A)}\leq \frac{2\norm{E}^{2}}{\lambda_{k}(\widehat{A})-a_{nn}+\sqrt{\abs{\lambda_{k}(\widehat{A})-a_{nn}}^{2}+4\norm{E}^{2}}}
        \leq  \frac{4\norm{E}^{2}}{\widehat{g}+\sqrt{\widehat{g}^{2}+16\norm{E}^{2}}}, 
    \end{equation*}
    where we use 
    \begin{equation*}
        \lambda_{k}(\widehat{A})-a_{nn} \geq a_{kk}-a_{nn} -\bigabs{\lambda_{k}(\widehat{A})-a_{kk}} \geq \frac{g}{2}-2(n-1)(n-2)\epsilon =: \frac{\widehat{g}}{2}.
    \end{equation*}
    From $L_{nj}(A)\leq \epsilon$, we know that 
    \begin{equation*}
        \norm{E}^{2}=\sum_{j=1}^{n-1}\abs{a_{nj}}^{2} \leq (n-1)\epsilon (g+\epsilon).
    \end{equation*}
    Thus, plugging it into the Li--Li's bound and using 
    \begin{equation*}
        \widehat{g} = g-4(n-1)(n-2)\epsilon\geq \frac{n+1}{2n}g>\frac{g}{2},
    \end{equation*}
    we know that 
    \begin{equation*}
        \bigabs{\lambda_{k}(\widehat{A})-\lambda_{k}(A)} \leq \frac{4(n-1)\epsilon (g+\epsilon)}{\widehat{g}+\sqrt{\widehat{g}^{2}+16(n-1)(\epsilon g+\epsilon^{2})}}\leq 4(n-1)\epsilon.
    \end{equation*}
    The proof is finished by the triangular inequality:
    \begin{equation*}
        \bigabs{\lambda_{k}(A)-a_{kk}}\leq 
        \bigabs{\lambda_{k}(A)-\lambda_{k}(\widehat{A})}+
        \bigabs{\lambda_{k}(\widehat{A})-a_{kk}}\leq 2n(n-1)\epsilon.
    \end{equation*} 
\end{proof}

\subsection{Relative complexity}
Compared with the classical greedy pivoting strategy based on $\abs{a_{ij}}$, evaluating the new pivoting criterion $L_{ij}$ in~\cref{eq:pivot} is slightly more expensive. In the initial phase, each $L_{ij}(A)$ can be computed using $\order(1)$ operations, for a total of $\order(n^2)$ operations. After a Jacobi rotation is applied, the values $L_{ij}(A)$ must be updated. Since only the corresponding $i$th and $j$th rows and columns are affected, only $\order(n)$ such quantities require updating. Therefore, relative to the greedy strategy, the additional computational overhead remains within a constant factor.

\section{Numerical experiments}

To illustrate the effectiveness of our new pivoting strategy, we perform the following numerical experiment\footnote{All numerical experiments were implemented in MATLAB~R2022b and carried out on an AMD Ryzen~9 6900HX processor (8 cores, 3.3--4.9GHz) with 32GB of RAM. Scripts for reproducing the numerical results are publicly available at \url{https://github.com/nShao678/Jacobi}
} with a preconditioned Jacobi method for eigenvalue computation, where the mixed-precision preconditioner is applied. 
Here, we use the Householder QR decomposition to reorthogonalize the eigenvector from lower precision. The lower precision and working precision are \texttt{single} and \texttt{double} in Matlab, respectively, that is, $\mpr_{\ell}\approx 10^{-8}$ and $\mpr\approx 10^{-16}$. The error of eigenvalues is measured by the total relative error $\sum_{i=1}^{n}\abs{\lambda_{i}-\widehat{\lambda}_{i}}/\norm{A}$,
where $\lambda_{i}$ and $\widehat{\lambda}_{i}$ denote the exact and computed eigenvalues of $A$, respectively.

\subsection{Artificial examples with clustered eigenvalues}

We consider $A =V\Lambda V^{\Ttran}$,
where $V\in\R^{1000\times 1000}$ is an orthogonal matrix and $\Lambda = \mathsf{blkdiag}\{\Lambda_{1},\Lambda_{2}\}$.
For $i=1,2$, the matrices $\Lambda_{i}$ consist of $\lambda_{i}^{(k)}$ with     
\begin{equation*}
    \lambda_{1}^{(k)} = 1+\exp(-\alpha k)
    \quad\text{and}\quad 
    \lambda_{2}^{(k)} = 2+\frac{k-1}{499}
    \quad\text{for}\quad k=1,\dotsc,500,
\end{equation*}
where $\alpha>0$ is a parameter to control the tightness of the eigenvalue cluster. Note that $\Lambda_{1}$ forms an eigenvalue cluster around $1$, while $\Lambda_{2}$ consists of eigenvalues that are equally spaced in the interval $[2,3]$ and are well separated from $\Lambda_{1}$.

Using $\alpha \in \{0.01,1\}$ and applying both the classical greedy pivoting strategy and our new strategy in \cref{eq:pivot}, we report the numerical results in \cref{fig:clu}. Our new pivoting strategy outperforms the classical greedy strategy slightly for $\alpha=0.01$, and significantly for $\alpha=1$, which correspond to the situation with tight eigenvalue cluster. Moreover, we remark that convergence of the eigenvalues does not necessarily imply convergence of the off-diagonal entries. This is because, according to the Li--Li's bound, when the eigenvalues are well separated, even relatively large off-diagonal entries have little effect on the eigenvalue accuracy. Indeed, due to the large gap between $\Lambda_{1}$ and $\Lambda_{2}$, the convergence of eigenvalues is much faster than that of the off-diagonal entries in our strategy, but not in the classical greedy one.

\begin{figure}[htbp]
    \centering
    \includegraphics[width=\figsizeD]{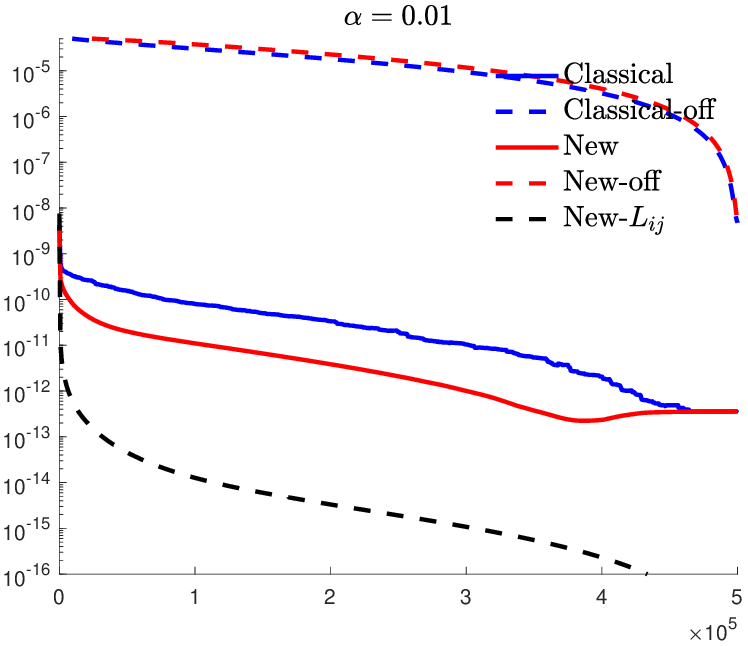}
    \includegraphics[width=\figsizeD]{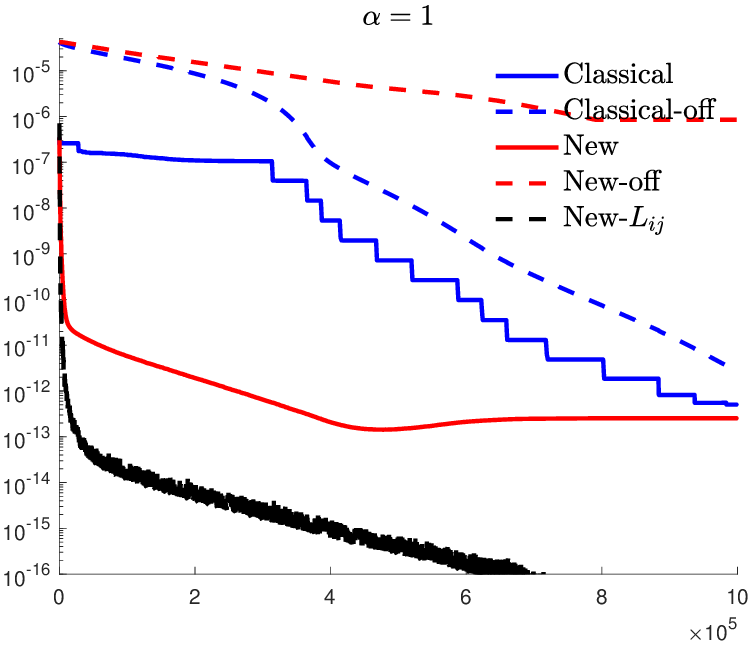}
    \caption{Total relative error of eigenvalues (solid) and Frobenius norm of off-diagonal entries (dashed) \textit{vs.} number of iterations. The black dashed lines are the largest $L_{ij}(A)$ at each step.}
    \label{fig:clu}
\end{figure}

To further investigate the difference in convergence, we look into the convergence history when $\alpha=1$. We first permute  the matrix after preconditioning such that the eigenvalue cluster is in the top-left corner, that is, 
\begin{equation*}
    \widetilde{A} = \begin{bmatrix}
        \widetilde{A}_{11} & \widetilde{A}_{12}\\ 
        \widetilde{A}_{21} & \widetilde{A}_{22}
    \end{bmatrix} = \begin{bmatrix}
        \Lambda_{1} &\\ 
        & \Lambda_{2}
    \end{bmatrix}+\order(\mpr_{\ell}).
\end{equation*}
Then we plot the magnitudes of the matrix entries after 100000, 200000, and 300000 iterations for both the classical and the new pivoting strategies in \cref{fig:hist}. We see that the two strategies eliminate off-diagonal entries in completely different ways. The classical strategy first annihilates the off-diagonal entries in $\widetilde{A}_{22}$ and then those in $\widetilde{A}_{12}$ and $\widetilde{A}_{21}$, whereas our new strategy begins with $\widetilde{A}_{11}$ and then proceeds to $\widetilde{A}_{22}$. 
Recall that $\widetilde{A}_{11}$ contains a cluster of eigenvalues. 
The nonzero entries in this block can severely perturb the eigenvalues according to the Li--Li's bound \cref{eq:LiLi}, indicating that these entries should be eliminated first. In contrast, for the $\widetilde{A}_{12}$ and $\widetilde{A}_{21}$ blocks, the large spectral gap between the eigenvalues in $\widetilde{A}_{11}$ and $\widetilde{A}_{22}$ implies, again by the Li--Li's bound, that the nonzero entries there are essentially harmless for eigenvalue accuracy and thus should be deferred.
Although our strategy is greedy, its elimination order still conforms to the pattern suggested by the Li--Li's bound.
In contrast, the eigenvalue cluster in $\widetilde{A}_{11}$ causes the classical strategy to process this block last. This occurs because the off-diagonal entries in $\widetilde{A}_{11}$ are much smaller in magnitude than other off-diagonal entries of $\widetilde{A}$ when the cluster is tight. To see this, suppose that $\Lambda_{1}=\lambda I+\epsilon D$ with $\norm{D}=1$, then 
\begin{equation*}
    \widetilde{A}_{11} = V_{1}^{\Ttran}(\lambda I+\epsilon D)V_{1} = \lambda I+\epsilon V_{1}^{\Ttran}DV_{1},
\end{equation*}
where $V_{1}$ contains orthonormal columns. It is clear that the magnitude of these off-diagonal entries are at most $\epsilon$, which can be smaller than $\mpr_{\ell}$, the lower precision.
In our experiments with $\alpha =1$, the off-diagonal entries within the cluster have magnitudes on the order of $10^{-8}$, whereas those in the other parts of the matrix are typically on the order of $10^{-6}$.
Consequently, our newly proposed strategy eliminates nonzero entries in a more effective order, whereas the classical strategy expends effort on parts that have little impact.
This leads to a substantial acceleration in convergence.

\begin{figure}[htbp]
    \centering
    \includegraphics[width=\figsizeT]{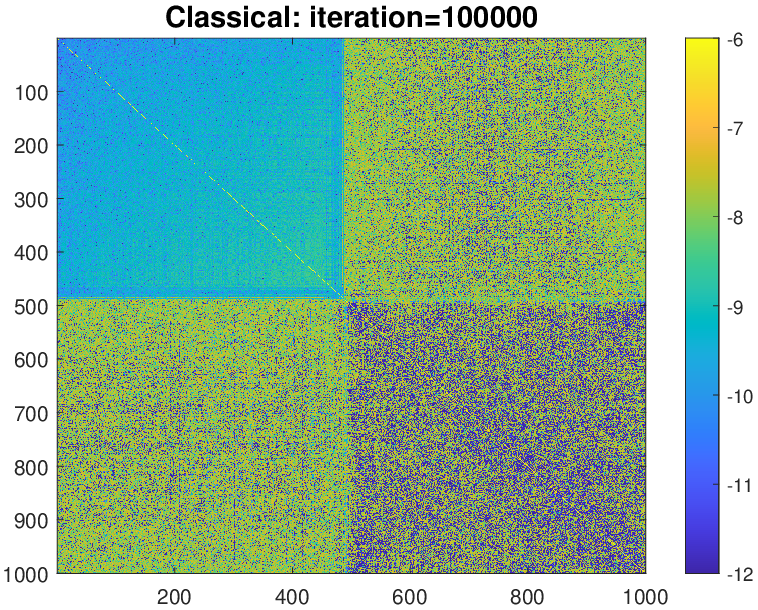}
    \includegraphics[width=\figsizeT]{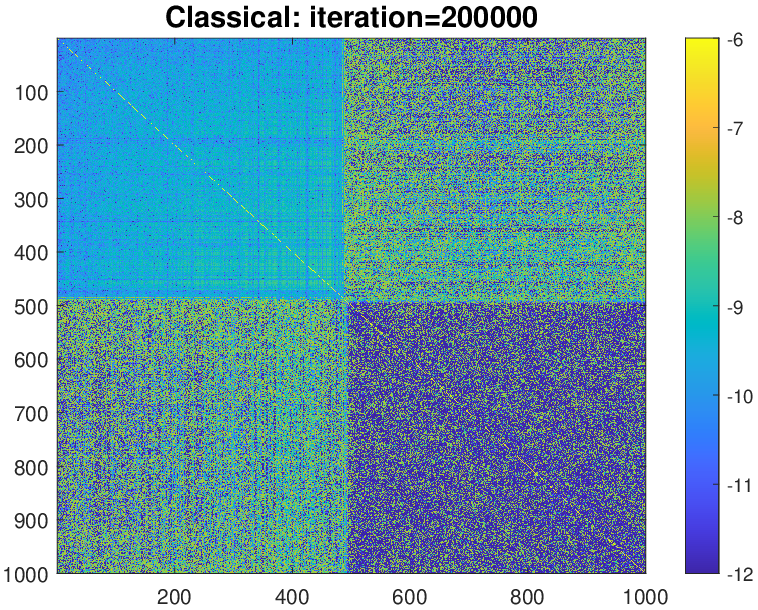}
    \includegraphics[width=\figsizeT]{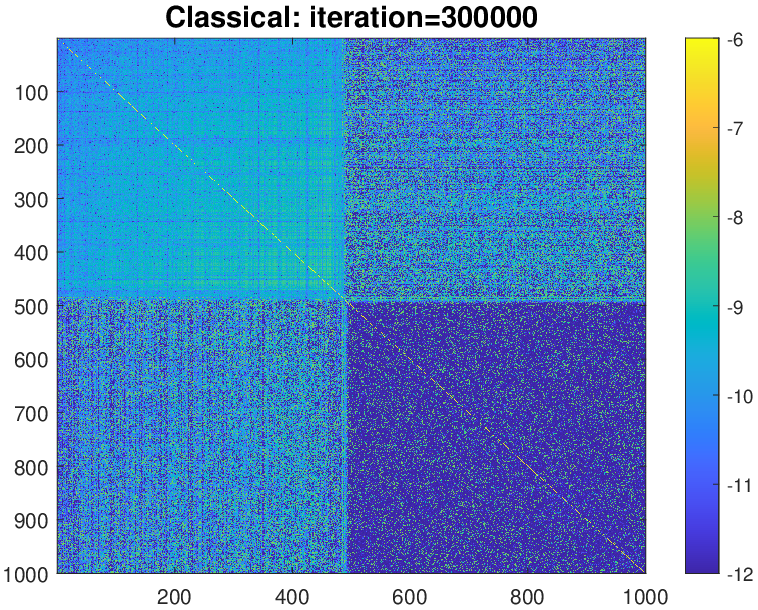}

    \includegraphics[width=\figsizeT]{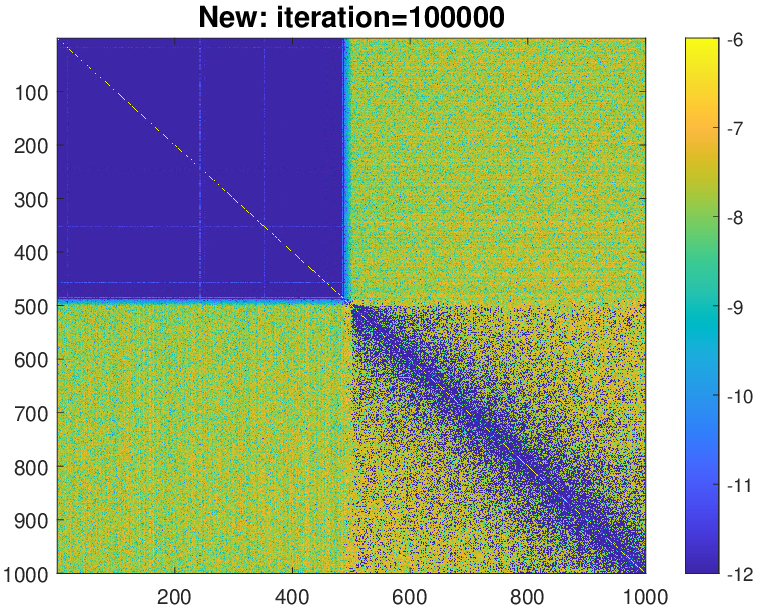}
    \includegraphics[width=\figsizeT]{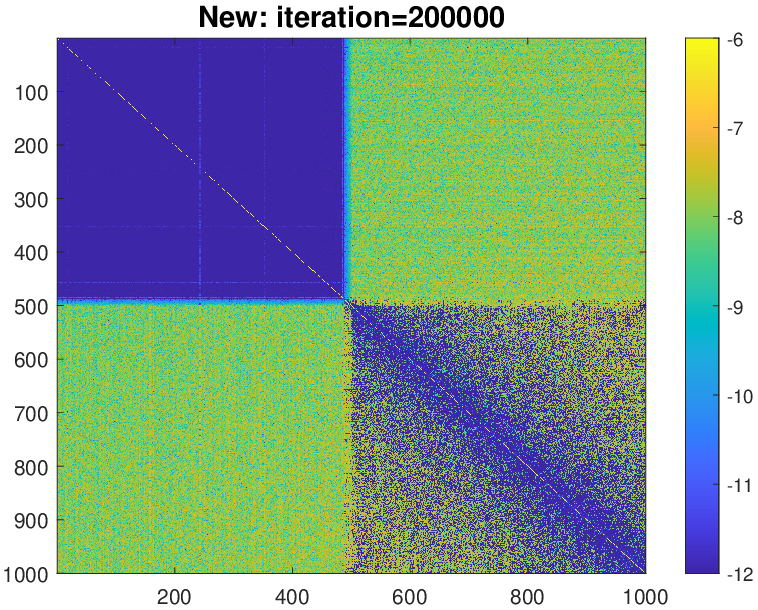}
    \includegraphics[width=\figsizeT]{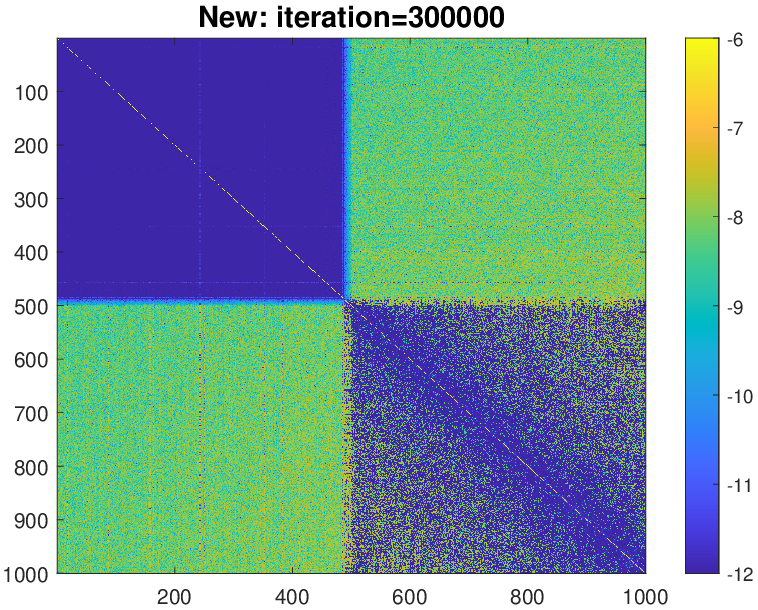}
    \caption{Magnitude of entries after few iterations. The upper-left block consists of a tight eigenvalue cluster.}
    \label{fig:hist}
\end{figure}

\subsection{Hilbert matrices}
Let $H_{n} = (h_{ij})\in\R^{n\times n}$ be the Hilbert matrix \cite[Sec.~28.1]{Higham2002}, where $h_{ij}=1/(i+j-1)$. It is a classical example of an ill-conditioned symmetric positive definite matrix, as it has exponentially decaying singular values \cite{Beckermann2019}. Consequently, computing the eigenvalues of $H_{n}$ with high \emph{relative} accuracy is a challenging task.
Although the Jacobi method can be more accurate than QR in some situations \cite{Demmel1992}, it cannot significantly improve relative accuracy here because the diagonal entries of $H_{n}$ differ by at most a factor of $n$. For the same reason, applying the one-sided Jacobi method to the \emph{computed} Cholesky factor is also ineffective, since the Cholesky factorization itself cannot be obtained with high relative accuracy.
However, as shown in \cite{Mathias1995}, applying the cyclic Jacobi method to the Cholesky factor $C_{n}$ given in \emph{closed form}, whose $(i,j)$ entry reads
\begin{equation*}
    c_{ij} = \frac{\sqrt{2j-1}\bigl((i-1)!\bigr)^{2}}{(i+j-1)!(i-j)!}\quad \text{for}\quad  i\geq j,
\end{equation*} 
yields eigenvalues with high relative accuracy.

Unfortunately, neither the classical greedy strategy nor our newly proposed strategy succeeds in this setting. This is because, for very small singular values (much smaller than $\mpr$), even an $\order(\mpr)$ off-diagonal perturbation could introduce a non-negligible relative error. 
Under absolute pivoting strategies, the off-diagonal entries near $\order(1)$ singular values are likely to be $\order(\mpr)$, so they are preferentially selected, while the smaller, but more harmful, off-diagonal entries associated with small singular values are never chosen.
To handle this situation, we
slightly modify our cost function into 
\begin{equation*}
    \widehat{L}_{ij}(A) = \Bigl(\frac{1}{a_{ii}}+\frac{1}{a_{jj}}\Bigr)L_{ij}(A),
\end{equation*}
where we assume that the matrix $A$ is symmetric positive definite.

Consider the $100\times 100$ Hilbert matrix $H_{100}$ and its Cholesky factor $C_{100}$. We first compute its singular values using MATLAB's symbolic computation as a reference solution. For our Jacobi method, we do \emph{not} employ mixed-precision preconditioning, since the associated (global) orthogonal transformations hinder the attainment of high relative accuracy. As illustrated in the left panel of \cref{fig:hilb}, our Jacobi method achieves higher relative accuracy than MATLAB's built-in \texttt{svd}. In the right panel, we compare our pivoting strategy with a randomized pivoting strategy, in which all pivot pairs $(i,j)$ are selected with equal probability.
Our strategy is clearly more efficient: after only 1000 pivots, the relative accuracy already surpasses that obtained by 50000 randomized pivots. Moreover, our method converges after approximately 100000 rotations, whereas the randomized approach fails to converge even after 200000 rotations.

\begin{figure}[htbp]
    \centering
    \includegraphics[width=\figsizeD]{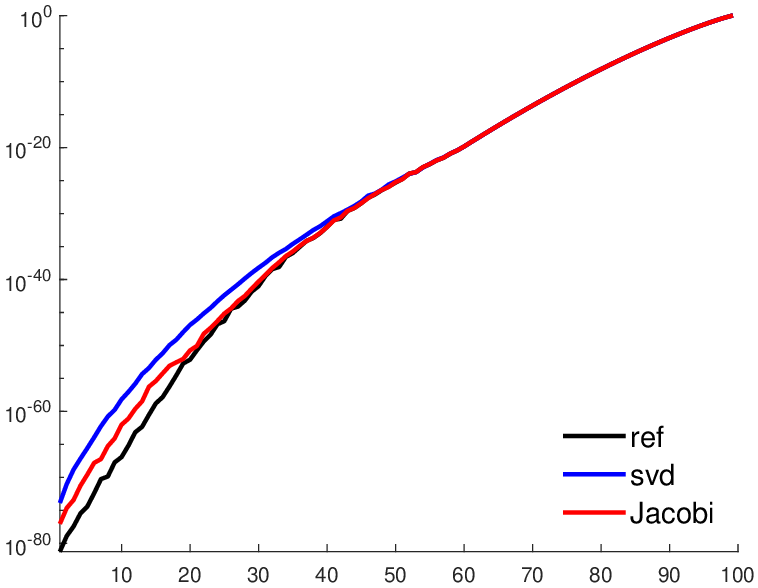}
    \includegraphics[width=\figsizeD]{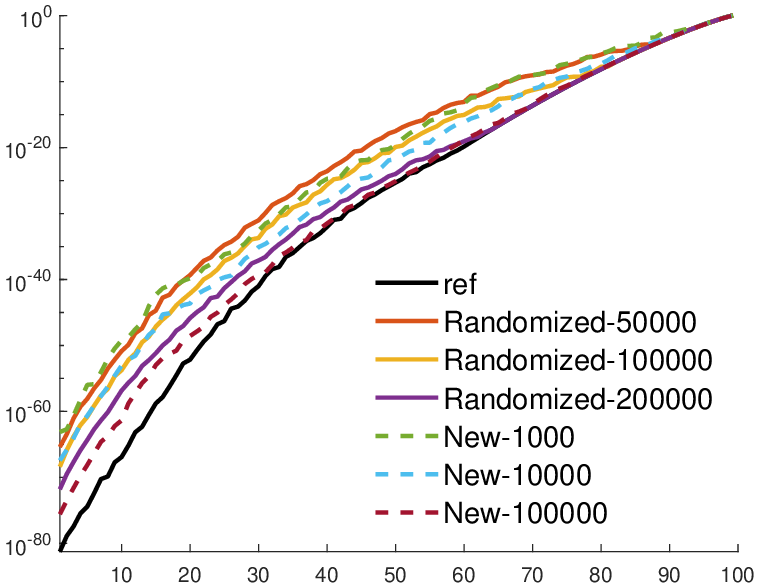}
    \caption{Singular values of $C_{100}$: reference (exact), and computed. The reference solution is computed via symbolic computation in MATLAB. For the right panel, the number following each label is the number of Jacobi rotations we have performed.}
    \label{fig:hilb}
\end{figure}

\section{Conclusion}
In this paper, we proposed a new pivoting strategy for accelerating the convergence of the Jacobi method by incorporating eigenvalue perturbation bounds. For the examples considered, our strategy significantly outperforms the classical greedy Jacobi method when there are some tight eigenvalue clusters. The strategy based on perturbation theory also has the potential to be applied to the block Jacobi method \cite{VanLoan1986} and achieve the communication lower bounds described in \cite{demmel2025minimizing}.

\bibliographystyle{abbrvurl}

\begin{thebibliography}{10}

\bibitem{Beckermann2019}
B.~Beckermann and A.~Townsend.
\newblock Bounds on the singular values of matrices with displacement structure.
\newblock {\em SIAM Rev.}, 61(2):319--344, 2019.
\newblock \href {https://doi.org/10.1137/19M1244433} {\path{doi:10.1137/19M1244433}}.

\bibitem{demmel2025minimizing}
J.~Demmel, H.~Luo, R.~Schneider, and Y.~Wang.
\newblock Minimizing the arithmetic and communication complexity of {J}acobi's method for eigenvalues and singular values.
\newblock {\em arXiv preprint arXiv:2506.03466}, 2025.
\newblock \href {https://doi.org/10.48550/arXiv.2506.03466} {\path{doi:10.48550/arXiv.2506.03466}}.

\bibitem{Demmel1992}
J.~Demmel and K.~Veseli\'c.
\newblock Jacobi's method is more accurate than {$QR$}.
\newblock {\em SIAM J. Matrix Anal. Appl.}, 13(4):1204--1245, 1992.
\newblock \href {https://doi.org/10.1137/0613074} {\path{doi:10.1137/0613074}}.

\bibitem{detherage2025unified}
I.~Detherage and R.~Shah.
\newblock A unified perspective on orthogonalization and diagonalization.
\newblock {\em arXiv preprint arXiv:2505.02023}, 2025.
\newblock \href {https://doi.org/10.48550/arXiv.2505.02023} {\path{doi:10.48550/arXiv.2505.02023}}.

\bibitem{Drmac2007}
Z.~Drma{\v c} and K.~Veseli\'c.
\newblock New fast and accurate {J}acobi {SVD} algorithm. {I}.
\newblock {\em SIAM J. Matrix Anal. Appl.}, 29(4):1322--1342, 2007.
\newblock \href {https://doi.org/10.1137/050639193} {\path{doi:10.1137/050639193}}.

\bibitem{Drmac2007a}
Z.~Drma{\v c} and K.~Veseli\'c.
\newblock New fast and accurate {J}acobi {SVD} algorithm. {II}.
\newblock {\em SIAM J. Matrix Anal. Appl.}, 29(4):1343--1362, 2007.
\newblock \href {https://doi.org/10.1137/05063920X} {\path{doi:10.1137/05063920X}}.

\bibitem{Gao2025}
W.~Gao, Y.~Ma, and M.~Shao.
\newblock A mixed precision {J}acobi {SVD} algorithm.
\newblock {\em ACM Trans. Math. Software}, 51(1):Art. 5, 33, 2025.

\bibitem{Hestenes1958}
M.~R. Hestenes.
\newblock Inversion of matrices by biorthogonalization and related results.
\newblock {\em J. Soc. Indust. Appl. Math.}, 6:51--90, 1958.

\bibitem{Higham2002}
N.~J. Higham.
\newblock {\em Accuracy and stability of numerical algorithms}.
\newblock Society for Industrial and Applied Mathematics (SIAM), Philadelphia, PA, second edition, 2002.
\newblock \href {https://doi.org/10.1137/1.9780898718027} {\path{doi:10.1137/1.9780898718027}}.

\bibitem{Higham2022}
N.~J. Higham and T.~Mary.
\newblock Mixed precision algorithms in numerical linear algebra.
\newblock {\em Acta Numer.}, 31:347--414, 2022.
\newblock \href {https://doi.org/10.1017/S0962492922000022} {\path{doi:10.1017/S0962492922000022}}.

\bibitem{Higham2025}
N.~J. Higham, F.~Tisseur, M.~Webb, and Z.~Zhou.
\newblock Computing {A}ccurate {E}igenvalues using a {M}ixed-{P}recision {J}acobi {A}lgorithm.
\newblock {\em SIAM J. Matrix Anal. Appl.}, 46(4):2423--2448, 2025.
\newblock \href {https://doi.org/10.1137/25M1723748} {\path{doi:10.1137/25M1723748}}.

\bibitem{Jacobi1846}
C.~G.~J. Jacobi.
\newblock \"uber ein leichtes {V}erfahren die in der {T}heorie der {S}\"acularst\"orungen vorkommenden {G}leichungen numerisch aufzul\"osen.
\newblock {\em J. Reine Angew. Math.}, 30:51--94, 1846.
\newblock \href {https://doi.org/10.1515/crll.1846.30.51} {\path{doi:10.1515/crll.1846.30.51}}.

\bibitem{Li2005}
C.-K. Li and R.-C. Li.
\newblock A note on eigenvalues of perturbed {H}ermitian matrices.
\newblock {\em Linear Algebra Appl.}, 395:183--190, 2005.
\newblock \href {https://doi.org/10.1016/j.laa.2004.08.026} {\path{doi:10.1016/j.laa.2004.08.026}}.

\bibitem{Mathias1995}
R.~Mathias.
\newblock Accurate eigensystem computations by {J}acobi methods.
\newblock {\em SIAM J. Matrix Anal. Appl.}, 16(3):977--1003, 1995.
\newblock \href {https://doi.org/10.1137/S089547989324820X} {\path{doi:10.1137/S089547989324820X}}.

\bibitem{VanLoan1986}
C.~F. Van~Loan.
\newblock The block {J}acobi method for computing the singular value decomposition.
\newblock In {\em Computational and combinatorial methods in systems theory ({S}tockholm, 1985)}, pages 245--255. North-Holland, Amsterdam, 1986.

\bibitem{Veselic1989}
K.~Veseli\'c and V.~Hari.
\newblock A note on a one-sided {J}acobi algorithm.
\newblock {\em Numer. Math.}, 56(6):627--633, 1989.
\newblock \href {https://doi.org/10.1007/BF01396349} {\path{doi:10.1007/BF01396349}}.

\bibitem{Wilkinson1962}
J.~H. Wilkinson.
\newblock Note on the quadratic convergence of the cyclic {J}acobi process.
\newblock {\em Numer. Math.}, 4:296--300, 1962.
\newblock \href {https://doi.org/10.1007/BF01386321} {\path{doi:10.1007/BF01386321}}.

\bibitem{zhou2026computing}
Z.~Zhou, F.~Tisseur, and M.~Webb.
\newblock Computing accurate singular values using a mixed-precision one-sided {J}acobi algorithm.
\newblock {\em arXiv preprint arXiv:2602.18134}, 2026.
\newblock \href {https://doi.org/10.48550/arXiv.2602.18134} {\path{doi:10.48550/arXiv.2602.18134}}.

\end{thebibliography}
\end{document}